\documentclass[12pt]{article}

\usepackage{amsmath, amsthm, amssymb, fullpage, comment}
\usepackage{graphicx}
\usepackage{tikz}
\usepackage{dsfont} 

\usepackage{graphics}
\usepackage{graphicx}   

\usepackage{float}

\usepackage{booktabs}

\newtheorem{theo}{Theorem}[section]
\newtheorem{prop}[theo]{Proposition}
\newtheorem{lemma}[theo]{Lemma}
\newtheorem{coro}[theo]{Corollary}

\usetikzlibrary{shapes.geometric}
\usetikzlibrary{decorations.markings}

\pgfdeclarelayer{edgelayer}
\pgfdeclarelayer{nodelayer}
\pgfsetlayers{edgelayer,nodelayer,main}

\tikzstyle{none}=[inner sep=0pt]
\tikzstyle{none}=[inner sep=0pt]
 \definecolor{hexcolor0xff0000}{rgb}{1.000,0.000,0.000}
\definecolor{hexcolor0x000000}{rgb}{0.000,0.000,0.000}
 \definecolor{hexcolor0x00ff00}{rgb}{0.000,1.000,0.000}
 \definecolor{hexcolor0xffff00}{rgb}{1.000,1.000,0.000}
 \definecolor{hexcolor0x000000}{rgb}{0.000,0.000,0.000}
 \definecolor{hexcolor0xffffff}{rgb}{1.000,1.000,1.000}
 \definecolor{hexcolor0x000000}{rgb}{0.000,0.000,0.000}
 \definecolor{hexcolor0x000000}{rgb}{0.000,0.000,0.000}
 \definecolor{hexcolor0x000000}{rgb}{0.000,0.000,0.000}
 \definecolor{hexcolor0xff0000}{rgb}{1.000,0.000,0.000}
 \definecolor{hexcolor0xff0000}{rgb}{1.000,0.000,0.000}
 \definecolor{hexcolor0x000000}{rgb}{0.000,0.000,0.000}
 \definecolor{hexcolor0x00ff00}{rgb}{0.000,1.000,0.000}
 \definecolor{hexcolor0xffff00}{rgb}{1.000,1.000,0.000}
 \definecolor{hexcolor0x000000}{rgb}{0.000,0.000,0.000}
 \definecolor{hexcolor0xffffff}{rgb}{1.000,1.000,1.000}
 \definecolor{hexcolor0x000000}{rgb}{0.000,0.000,0.000}
 \definecolor{hexcolor0xffffff}{rgb}{1.000,1.000,1.000}
 \definecolor{hexcolor0xffffff}{rgb}{1.000,1.000,1.000}
 \definecolor{hexcolor0xffffff}{rgb}{1.000,1.000,1.000}
 \definecolor{hexcolor0x000000}{rgb}{0.000,0.000,0.000}
 \definecolor{hexcolor0x0200ff}{rgb}{0.008,0.000,1.000}
 \definecolor{hexcolor0x1cff00}{rgb}{0.110,1.000,0.000}
 \definecolor{hexcolor0x000000}{rgb}{0.000,0.000,0.000}
 \definecolor{hexcolor0xfff300}{rgb}{1.000,0.953,0.000}
 \definecolor{hexcolor0x000000}{rgb}{0.000,0.000,0.000}
 \definecolor{hexcolor0x000000}{rgb}{0.000,0.000,0.000}
 \definecolor{hexcolor0x000000}{rgb}{0.000,0.000,0.000}
 \definecolor{hexcolor0xff0000}{rgb}{1.000,0.000,0.000}
 \definecolor{hexcolor0x000000}{rgb}{0.000,0.000,0.000}

\tikzstyle{SmallVert}=[circle,fill=hexcolor0xffffff,draw=hexcolor0x000000, scale = 0.65]
\tikzstyle{arrow}=[-,draw=hexcolor0x000000,postaction={decorate},decoration={markings,mark=at position .5 with {\arrow{>}}},line width=1.000]
\tikzstyle{simple}=[-,draw=hexcolor0x000000,line width=1.000]
\tikzstyle{RedEdge}=[-,draw=hexcolor0x000000,line width=2.600]
\tikzstyle{dashedred}=[-,draw=hexcolor0x000000,line width=2.600, dashed]
\tikzstyle{dashedblack}=[-,draw=hexcolor0x000000,line width=1.000, dashed]
\tikzstyle{Clearer}=[circle,fill=hexcolor0xffffff,draw=hexcolor0xffffff]
\tikzstyle{Arrow}=[->,draw=hexcolor0x000000,line width=2.400]
\tikzstyle{redvert}=[rectangle,fill=hexcolor0xffffff,draw=hexcolor0x000000,scale=0.65]
\tikzstyle{bluevert}=[regular polygon,regular polygon sides=3,shape border rotate=0,fill=hexcolor0xffffff,draw=hexcolor0x000000,scale=0.7]
\tikzstyle{greenvert}=[shape=diamond,fill=hexcolor0xffffff,draw=hexcolor0x000000,scale=0.85]
\tikzstyle{yellowvert}=[circle,fill=hexcolor0xffffff,draw=hexcolor0x000000,scale=0.75]
\tikzstyle{tinyvert}=[circle,fill=hexcolor0xffffff,draw=hexcolor0x000000,scale=0.1]
\tikzstyle{point}=[rectangle,fill=hexcolor0x000000,draw=hexcolor0x000000,scale=0.08]

\begin{document}

\title{The complexity of frugal digraph homomorphisms}
	\author{Stefan Bard\thanks{Department of Mathematics and Statistics, University of Victoria, BC, Canada}, 
	Gary MacGillivray\thanks{Department of Mathematics and Statistics, University of Victoria, BC, Canada;\ Research supported by NSERC}, 
	Jacobus Swarts\thanks{Department of Mathematics, Vancouver Island University, Nanaimo, BC, Canada}}
	\date{}
	\maketitle

\begin{abstract}
    For an integer $t \geq 1$, a homomorphism of a digraph G to a digraph $H$ is {\it $t$-frugal} if no more than $t$ in-neighbours of any vertex of $G$ have the same image.  There is a dichotomy theorem based on structural properties when $t=1$ and $H$ has a loop at each vertex, but there is unlikely to be such a theorem for general digraphs $H$.  For $t \geq 2$ we describe a structural dichotomy theorem for $t$-frugal homomorphisms of general digraphs.
\end{abstract}


\section{Introduction}

A homomorphism of a digraph $G$ to a digraph $H$ is a function $f: V(G) \to V(H)$ such that if $xy$ is an arc of $G$, then $f(x)f(y)$ is an arc of $H$.    Note that if $H^\prime$ is obtained from $H$  by deleting all but one member of each collection of arcs with the same ends and orientation and $G^\prime$ is obtained from $G$ in the same way, then a digraph $G$ has a homomorphism to $H$ if and only if $G$ has a homomorphism to $H^\prime$ if and only if $G^\prime$ has a homomorphism to $H^\prime$.  Thus we assume  that no two vertices of any digraph are joined by more than one arc with the same ends and orientation.

Homomorphisms generalize colourings.  
For example, a proper $k$-colouring of a graph $G$ can be defined as, and is equivalent to, a homomorphism of $G$ to some simple graph on $k$ vertices.  From this perspective one can see that the dichotomy theorem stating the $k$-colouring problem is solvable in polynomial time if $k \leq 2$ and NP-complete if $k \geq 3$  generalizes to the theorem that the problem of deciding whether there is a homomorphism of a given  graph $G$ to a fixed graph $H$ is solvable in polynomial time if $H$ is bipartite or has a loop, and is NP-complete if $H$ is a non-bipartite simple graph \cite{HN}.

There is (also) a dichotomy theorem for the complexity of deciding whether there is a homomorphism of a given digraph $G$ to a fixed digraph $H$;  the dichotomy has an algebraic description \cite{B17, Z17}. 
Because every constraint satisfaction problem is polynomially equivalent to a problem involving homomorphisms of digraphs \cite{FederVardi}, it seems likely that a structural description would be very complicated.

For an integer $t \geq 1$, a homomorphism of a digraph $G$ to a digraph $H$ is {\it $t$-frugal} if no more than $t$ in-neighbours of any vertex of $G$ have the same image. The 1-frugal homomorphisms, also known as {\it locally-injective homomorphisms}, were studied in \cite{inj8}. In this paper, the authors prove the following results.
\begin{itemize}
    \item A structural dichotomy theorem for the problem of deciding the existence of a 1-frugal homomorphism to reflexive digraphs (digraphs with a loop at each vertex). The dichotomy states that if $H$ is a reflexive digraph, which is also a core, the problem of deciding the existence of a $1$-frugal homomorphism to $H$ is solvable in polynomial time when $H$ is a loop, a reflexive arc or a reflexive $2$-cycle, and NP-complete otherwise.
    \item For irreflexive (no loops) digraphs $H$, the problem of deciding the existence of a 1-frugal homomorphism to $H$ was shown to be NP-complete when the maximum in-degree of $H$ is at least $3$, and solvable in polynomial time when $H$ has maximum in-degree 1.
    \item It was further shown that any directed graph homomorphism is polynomially equivalent to a 1-frugal homomorphism  problem in which $H$ has maximum in-degree $2$.
\end{itemize} 
Thus, for the same reason as above, it seems likely that a structural description of the dichotomy for deciding the existence of a 1-frugal homomorphism to an irreflexive digraph would be very complicated; hence the same comment applies to general digraphs $H$.

In this paper we consider the complexity of $t$-frugal homomorphisms for $t \geq 2$.  In contrast to the $t = 1$ case, we are able to establish a dichotomy theorem for which the dichotomy admits a structural description.  Further, the description makes no additional assumptions about $H$ --- it could have directed 2-cycles or loops at some vertices.

Other definitions of frugality are possible for digraphs since each vertex of a digraph has two neighbourhoods: an in-neighbourhood and an out-neighbourhood.  The condition that no more than $t$ in-neighbours of any vertex are assigned the same image can be (equivalently) applied to out-neighbourhoods, to in- and out-neighbourhoods separately, or to in- and out-neighbourhoods together.  In addition, the target digraph $H$ can be restricted to be reflexive or irreflexive.  Problems of this type have been considered  \cite{BBDMY, Russell, Rus1}.

There is a rich history of 1-frugal homomorphisms of graphs.  Further, each homomorphism problem gives rise to an associated colouring problem by defining the `chromatic number' to be the minimum number of vertices in a graph or digraph to which there exists a homomorphism of the desired type.  For more information see  \cite{SBard} and the references therein.

\section{Special versions of SAT}

In this section we establish the complexity of several special versions of the satisfiability problem that will be used in the proof of the main result.  The problem \emph{monotone $\ell$-in-$k$-SAT} is described as follows: for $1 \leq \ell \leq k$, given Boolean variables $x_1, x_2, \ldots, x_n$ and a collection of $k$-variable clauses over them in which no clause contains a negated variable, is there a truth assignment to the variables such that exactly $\ell$ of the variables in each clause are true?

\begin{prop}
\label{SATswitch}
For $1 \leq \ell < k$, a monotone collection of $k$-variable clauses has a satisfying truth assignment where each clause has exactly $\ell$ true variables if and only if it has a satisfying truth assignment where each clause has exactly $k - \ell$ true variables.
\end{prop}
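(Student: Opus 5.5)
The plan is to complement the truth assignment. Fix a monotone collection $\mathcal{C}$ of $k$-variable clauses over $x_1, x_2, \ldots, x_n$, and suppose $\tau$ is a truth assignment in which every clause has exactly $\ell$ true variables. Define $\bar\tau$ by $\bar\tau(x_i) = \neg\tau(x_i)$ for every $i$. Because the clauses are monotone, whether a clause counts a variable as ``true'' depends only on the value of that variable, with no negated literals to flip the count in the opposite direction. Hence a variable occurrence in a clause is true under $\bar\tau$ exactly when it is false under $\tau$.

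The key step is the counting in a single clause $C$. The clause has $k$ variable positions, of which exactly $\ell$ are true under $\tau$, so exactly $k-\ell$ are false under $\tau$. These $k-\ell$ positions are precisely the ones true under $\bar\tau$. Thus every clause has exactly $k-\ell$ true variables under $\bar\tau$, which proves the forward direction.

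The reverse direction follows from the same argument with $\ell$ replaced by $k-\ell$, since $1 \le k-\ell < k$ whenever $1 \le \ell < k$. Alternatively, one can note that complementation is an involution, so applying it again returns $\tau$.

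The only point needing care, and the one I expect to be the main obstacle, is the convention for clauses that contain a repeated variable. I would count occurrences with multiplicity: a clause is a $k$-tuple of variables, and ``exactly $\ell$ true'' means exactly $\ell$ of the $k$ positions receive true. Under this convention the argument above goes through unchanged. If instead the paper intends the $k$ variables of a clause to be distinct, the argument applies directly without any convention. Either way, no result stated earlier in the paper is needed.
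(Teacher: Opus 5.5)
Your proposal is correct and is essentially the paper's own proof: the paper also complements every variable's truth value, so that the $\ell$ true positions in each clause become false and the $k-\ell$ false ones become true. Your extra remarks on the reverse direction (via $1 \le k-\ell < k$ or the involution) and on counting repeated variables with multiplicity are sound but go beyond what the paper spells out.
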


\begin{proof}
Simply switch the truth value of every variable in a valid monotone $\ell$-in-$k$ truth assignment to get a valid monotone $(k-\ell)$-in-$k$ truth assignment.	
\end{proof}

As such, we consider only the complexity of monotone $\ell$-in-$k$-SAT for $\ell \leq k/2$.

\begin{prop}
\label{P2}
If $\ell \leq k/2$, then monotone $\ell$-in-$k$-SAT polynomially transforms to monotone $\ell$-in-$(k+1)$-SAT.
\end{prop}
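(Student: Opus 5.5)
The plan is to pad every clause of the given instance with one extra variable that every satisfying assignment of the new instance is forced to make false. Let $\mathcal{C}$ be a monotone $\ell$-in-$k$ instance with variables $x_1,\dots,x_n$. I introduce one fresh variable $z$ and replace each clause $C$ by $C \cup \{z\}$, which has $k+1$ variables. If $z$ is false, the new clause has exactly $\ell$ true variables if and only if $C$ does. So the whole difficulty is to build a small gadget, made only of monotone $(k+1)$-variable clauses, that forces $z$ to be false and is still satisfiable.

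\textbf{The gadget.} Introduce fresh variables $T=\{t_1,\dots,t_\ell\}$ and $F=\{f_1,\dots,f_{k+2-\ell}\}$, with $z=f_1$. For every $(k+1-\ell)$-subset $S$ of $F$, add the clause $T \cup S$, which has exactly $k+1$ variables. Since $\ell\le k/2$, we have $k+1-\ell\ge 1$ and $|F|=k+2-\ell\ge 2$, so this is a nonempty family of genuine clauses with distinct variables.

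The assignment that makes $T$ true and $F$ false satisfies every gadget clause. For the forcing, take any satisfying assignment and any two elements $f_i,f_j$ of $F$. The clauses $T\cup(F\setminus\{f_i,f'\})$ and $T\cup(F\setminus\{f_j,f'\})$, for a common third element $f'$, differ only in swapping $f_i$ and $f_j$. If $|F|=2$, instead compare $T\cup\{f_i\}$ and $T\cup\{f_j\}$. Either way, equal true counts give $f_i=f_j$, so all of $F$ has a common truth value $c$. Let $\tau$ be the number of true variables in $T$. Each gadget clause then gives $\tau+(k+1-\ell)c=\ell$. If $c=1$, then $\tau=2\ell-k-1<0$, because $\ell\le k/2$; this is impossible. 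Hence $c=0$, and in particular $z$ is false (and consequently $\tau=\ell$, so all of $T$ is true).

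\textbf{Correctness and size.} If $\mathcal{C}$ has a valid $\ell$-in-$k$ assignment, extend it by making $T$ true and $F$ false; every padded clause and every gadget clause then has exactly $\ell$ true variables. Conversely, any valid $\ell$-in-$(k+1)$ assignment of the new instance makes $z$ false, so its restriction to $x_1,\dots,x_n$ is valid for $\mathcal{C}$. The gadget has $\binom{k+2-\ell}{k+1-\ell}=k+2-\ell$ clauses, which is constant because $k$ is fixed. The transformation is therefore polynomial.

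The main obstacle is designing the forcing gadget using only monotone clauses. The hypothesis $\ell\le k/2$ is exactly what excludes the alternative $c=1$.
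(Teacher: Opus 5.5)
Your reduction is correct in substance, but one step is misstated. Since $|F|=k+2-\ell$, the $(k+1-\ell)$-subsets of $F$ are exactly the sets $F\setminus\{f\}$, so the gadget clauses are $T\cup(F\setminus\{f\})$ for $f\in F$. The sets $T\cup(F\setminus\{f_i,f'\})$ you compare have only $k$ variables and are not clauses of the instance. The comparison you want is between $T\cup(F\setminus\{f_j\})$ and $T\cup(F\setminus\{f_i\})$. These differ exactly by exchanging $f_i$ and $f_j$, so $f_i$ and $f_j$ get the same truth value immediately. Your separate case $|F|=2$ never occurs, since $1\le\ell\le k/2$ gives $|F|\ge k/2+2\ge 3$. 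With that fix, the rest is right: the count $\tau+(k+1-\ell)c=\ell$, the exclusion of $c=1$, and both directions of the equivalence.

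Your route differs from the paper's. The paper introduces $\ell+1$ fresh variables $x_1,\dots,x_{\ell+1}$ and pads each original clause $\ell+1$ times, once with each $x_i$. The input clauses themselves then force the $x_i$ to agree. A single extra clause $\{x_1,\dots,x_{\ell+1},y_1,\dots,y_{k-\ell}\}$ forces them false, because $\ell+1$ true variables would be too many. There the hypothesis $\ell\le k/2$ is used only in the converse direction, to make exactly $\ell$ of the $k-\ell$ variables $y_j$ true.

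In your construction each clause is padded once, with a single variable $z$, and a self-contained gadget of $k+2-\ell$ clauses does the forcing. The hypothesis enters in the forcing direction, to rule out $c=1$, while extending a satisfying assignment is automatic. Your version adds only a constant number of clauses, rather than multiplying the clause count by $\ell+1$. It also forces $z$ false independently of the input clauses. The paper's version is slightly shorter to state and check, with just one gadget clause.
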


\begin{proof}
	
Consider an instance $S$ of monotone $\ell$-in-$k$-SAT, where $\ell \leq k/2$. We construct an instance $S'$ of monotone $\ell$-in-$(k+1)$-SAT from $S$ as follows. Introduce $\ell + 1$ new variables $x_1, x_2, \ldots, x_{\ell + 1}$. For each clause $c$ in $S$, obtain $\ell + 1$ new clauses by adding each of the variables $x_i$, $1 \leq i \leq \ell + 1$, to $c$. Next, introduce $k - \ell$ new variables $y_1,y_2,\ldots, y_{k - \ell}$. Add a new clause consisting of $x_1, x_2,\ldots, x_{\ell + 1}, y_1,y_2,\ldots, y_{k - \ell}$. This completes the construction of $S'$. This construction can clearly be carried out in polynomial time.

Consider a satisfying truth assignment of $S'$ where each clause has exactly $\ell$ true variables. Notice that $x_1, x_2, \ldots, x_{\ell + 1}$ must be assigned the same truth value. If $x_i$ and $x_j$ are assigned different truth values, where $1 \leq i < j \leq \ell +1$, then for a clause $c$ in $S$, the corresponding clauses in $S'$ obtained by appending $x_i$ and $x_j$ to $c$ cannot each have $\ell$ true variables. Next, notice that a satisfying assignment where each clause has exactly $\ell$ true variables must assign $x_i$ the value false for each $i$, as otherwise the clause containing $x_1,x_2,\ldots,x_{\ell+1}$ will not have exactly $\ell$ true variables. Therefore, we can obtain a satisfying truth assignment of $S$  where each clause has exactly $\ell$ true variables by allowing the truth value of each variable to be equal to the truth value of the corresponding variable in our satisfying truth assignment of $S'$ where each clause has exactly $\ell$ true variables.

Now, consider a satisfying truth assignment of $S$ where each clause has exactly $\ell$ true variables. To obtain a satisfying truth assignment of $S'$ where each clause has exactly $\ell$ true variables, we first let the truth values of each variable in $S'$ which corresponds to a variable in $S$ be the truth value of that variable in our satisfying truth assignment of $S$ where each clause has exactly $\ell$ true variables. Then, assign $x_i$, $1 \leq i \leq \ell+1$, the value false. Then, assign exactly $\ell$ variables among $y_1,y_2,\ldots, y_{k - \ell}$ the value true, which can be done because $\ell \leq k/2$, and therefore $k - \ell \geq 2\ell - \ell = \ell$. Clearly, the clause consisting of $x_1, x_2,\ldots, x_{\ell + 1}, y_1,y_2,\ldots, y_{k - \ell}$ will have exactly $\ell$ true variables with this assignment. Furthermore, each other clause corresponds to a clause of $S$, with an additional false variable $x_i$, and because we started with a satisfying truth assignment of $S$, these clauses must each have exactly $\ell$ true variables as well. Therefore, each clause of $S'$ has exactly $\ell$ true variables in our constructed assignment, and therefore we have a satisfying truth assignment of $S$ where each clause has exactly $\ell$ true variables. This completes the proof.
	
\end{proof}

\begin{prop}
\label{P3}
Monotone $(\ell-1)$-in-$(2\ell-1)$-SAT polynomially transforms to monotone $\ell$-in-$(2\ell)$-SAT.
\end{prop}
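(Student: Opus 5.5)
The plan is to pad every clause with a single common new variable $z$, and let the symmetry of Proposition~\ref{SATswitch} handle whichever truth value $z$ ends up with. This way no gadget is needed to force $z$.

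\textbf{Construction.} Let $S$ be an instance of monotone $(\ell-1)$-in-$(2\ell-1)$-SAT.
\begin{enumerate}
\item Introduce one new variable $z$.
\item Replace each clause $c$ of $S$ by the clause $c \cup \{z\}$, which has $2\ell$ variables and is still monotone.
\item Call the resulting instance $S'$. It is clearly constructible in polynomial time.
\end{enumerate}

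\textbf{From $S$ to $S'$.} Suppose $S$ has an assignment in which each clause has exactly $\ell-1$ true variables. Keep that assignment on the original variables and set $z$ true. Then every clause of $S'$ has exactly $(\ell-1)+1=\ell$ true variables.

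\textbf{From $S'$ to $S$.} Take an assignment of $S'$ in which each clause has exactly $\ell$ true variables, and split into two cases according to the value of $z$.
\begin{itemize}
\item If $z$ is true, each original clause $c$ has exactly $\ell-1$ true variables. The restriction to the original variables therefore already satisfies $S$.
\item If $z$ is false, each original clause $c$ has exactly $\ell$ true variables out of $2\ell-1$. So it has exactly $(2\ell-1)-\ell=\ell-1$ false variables. Switch the truth value of every original variable, exactly as in the proof of Proposition~\ref{SATswitch}. Each clause of $S$ then has exactly $\ell-1$ true variables.
\end{itemize}
In both cases $S$ has a valid monotone $(\ell-1)$-in-$(2\ell-1)$ assignment, which completes the equivalence.

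\textbf{Main point.} The step I expected to be the obstacle was forcing the padding variable to be true. This would normally require an auxiliary clause, as with the $x_i$ and $y_j$ in Proposition~\ref{P2}. Here it is unnecessary, because $2\ell-1=(\ell-1)+\ell$: the case $z$ false corresponds exactly to the complemented instance. So the only care needed is to check this count, which I have done above.
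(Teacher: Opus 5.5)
Your proposal is correct and matches the paper's proof: the paper also appends a single new variable to every clause, sets it true in the forward direction, and in the reverse direction splits on its value, complementing all original variables via Proposition~\ref{SATswitch} when it is false. Your count $(2\ell-1)-\ell=\ell-1$ is exactly what makes that complemented case work.
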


\begin{proof}
	
Consider an instance $S$ of monotone $(\ell-1)$-in-$(2\ell-1)$-SAT. We construct an instance $S'$ of monotone $\ell$-in-$(2\ell)$-SAT from $S$ as follows. First, introduce a new variable $x$. For each clause $c$ in $S$, add to $S'$ the clause obtained by appending $x$ to $c$. This completes the construction of $S'$.

We must show that there is a satisfying truth assignment of $S'$ where each clause has exactly $\ell$ true variables if and only if there is a satisfying truth assignment of $S$ where each clause has exactly $\ell - 1$ true variables. Consider first a satisfying truth assignment of $S'$ where each clause has exactly $\ell$ true variables. There are two cases depending on the truth value of $x$.

Suppose first, we have a solution of $S'$ where $x$ is true. In this case, the assignment induced by this solution of $S'$ is in fact a solution of $S$, as each clause of $S$ has exactly one fewer true variable than the clauses of $S'$ under this assignment. Suppose next that we have a solution of $S'$ where $x$ is false. In this case, by construction, in each clause of $S$, exactly $\ell$ of the $2\ell -1$ variables are true. By Proposition~\ref{SATswitch}, we may simply switch the truth value of each variable in this assignment to receive an assignment where exactly $\ell - 1$ of the $2\ell -1$ variables in each clause are true, as desired.

Now, consider a satisfying truth assignment of $S$ where each clause has exactly $\ell - 1$ true variables. Clearly, a satisfying truth assignment of $S'$  where each clause has exactly $\ell$ true variables can be obtained from this by simply assigning $x$ the value true. This completes the proof.
\end{proof}
The previous propositions together give the following:

\begin{coro}
\label{linkSAT}
Let $k \geq 3$ be an integer. Then, monotone $\ell$-in-$k$-SAT is NP-complete.
\end{coro}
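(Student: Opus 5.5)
The corollary is meant for $1 \leq \ell < k$; when $\ell = k$ the problem is trivially solvable by setting every variable true. I would prove it by induction on $k$, using Propositions~\ref{SATswitch}, \ref{P2} and \ref{P3} to move from small cases to large ones. The base case is monotone $1$-in-$3$-SAT, which is NP-complete by Schaefer's dichotomy theorem; I would cite it as a standard result. By Proposition~\ref{SATswitch}, monotone $2$-in-$3$-SAT is then NP-complete as well. Membership in NP is immediate for every $\ell$ and $k$: given an assignment, we check that each clause has exactly $\ell$ true variables. So only NP-hardness needs proof.

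Assume that for some $k \geq 3$, monotone $\ell$-in-$k$-SAT is NP-complete for every $1 \leq \ell < k$. Fix $k+1$ and $1 \leq \ell < k+1$. Because of Proposition~\ref{SATswitch}, it is enough to treat $\ell \leq (k+1)/2$. There are two cases.
\begin{itemize}
\item \textbf{Case $\ell \leq k/2$.} We have $\ell < k$, so monotone $\ell$-in-$k$-SAT is NP-complete by induction. Proposition~\ref{P2} reduces it to monotone $\ell$-in-$(k+1)$-SAT.
\item \textbf{Case $k/2 < \ell \leq (k+1)/2$.} This forces $k+1 = 2\ell$, so $k = 2\ell - 1$. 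Since $k \geq 3$, we get $\ell \geq 2$, and hence $1 \leq \ell - 1 < 2\ell - 1 = k$. By induction, monotone $(\ell-1)$-in-$(2\ell-1)$-SAT is NP-complete, and Proposition~\ref{P3} reduces it to monotone $\ell$-in-$(2\ell)$-SAT.
\end{itemize}
Together with Proposition~\ref{SATswitch} for the remaining values $\ell > (k+1)/2$, this covers every $\ell$ with $1 \le \ell < k+1$ and completes the induction.

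The main obstacle is the base case, because none of the three propositions produces $k = 3$ from anything smaller. The propositions themselves are routine and are already proved. So everything rests on importing the NP-completeness of monotone $1$-in-$3$-SAT from the literature. One could also prove it directly by a reduction from $3$-SAT, but I would simply cite it.
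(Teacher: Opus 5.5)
Your proposal is correct and matches the paper's proof: induction on $k$ from the base case of monotone $1$-in-$3$-SAT (Schaefer), restriction to $\ell \le (k+1)/2$ via Proposition~\ref{SATswitch}, then Proposition~\ref{P2} when $\ell \le k/2$ and Proposition~\ref{P3} when $k+1 = 2\ell$. You also spell out three details the paper leaves implicit: membership in NP, the restriction to $\ell < k$, and the check that $\ell - 1 \geq 1$ in the second case.
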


\begin{proof}
It is known that monotone $1$-in-$3$-SAT (and hence monotone $2$-in-$3$-SAT) is NP-complete \cite{Schaefer}. Suppose that $\ell$-in-$k$-SAT is NP-complete for some $k \geq 3$ and all $\ell$, $1 \leq \ell \leq k-1$. Consider monotone $\ell'$-in-$(k+1)$-SAT. (By Proposition~\ref{SATswitch}, we need only consider values of $\ell'$ such that $1 \leq \ell' \leq (k+1)/2$.) If $\ell' \leq k/2$, then by Proposition~\ref{P2} monotone $\ell'$-in-$k$-SAT polynomially transforms to monotone $\ell'$-in-$(k+1)$-SAT. If $\ell' > k/2$, then $\ell' = (k+1)/2$. Thus, by Proposition~\ref{P3}, monotone $(\ell'-1)$-in-$k$-SAT polynomially transforms to monotone $\ell'$-in-$(k+1)$-SAT. In either case, the proof is complete by induction.
\end{proof}

\section{The main construction}

Let $G$ and $H$ be a directed graphs. We use the term \emph{$H$-colouring of $G$ } to refer to a homomorphism  of $G$ to $H$. In this section we determine, for each digraph $H$ (which may contain loops but not multiple arcs) and for all $t \geq 2$, the complexity of the following decision problem.



\begin{center}
\begin{tabular}{p{0.8\textwidth}}\toprule
\textbf{Problem:} $t$-frugal $H$-colouring\\\midrule
\hspace{1.5cm} \textbf{Instance:} An irreflexive  directed graph $G$.\\

\hspace{1.5cm} \textbf{Question:} Does there exist a $t$-frugal $H$-colouring of $G$?\\\bottomrule
\end{tabular}
\end{center}

Given a directed graph $H$, we define the graph $H^*$ as the graph with vertex set $V(H)$ and with $uv \in E(H^*)$ if and only if (in $H$) $u$ and $v$ are distinct, $u$ and $v$ have a common out-neighbour, and each of $u$ and $v$ has an out-neighbour in $H$ with in-degree $\Delta^-(H)$. This construction is inspired by the indicator construction in 
\cite{HN}.

\begin{theo}
\label{reduc}
Let $H$ be a directed graph, and let $t \geq 2$. Then, $H^*$-colouring polynomially transforms to $t$-frugal $H$-colouring.
\end{theo}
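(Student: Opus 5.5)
The plan is to build, from an instance $G$ of $H^*$-colouring (an undirected graph), an irreflexive digraph $G'$ in polynomial time such that $G$ is $H^*$-colourable if and only if $G'$ has a $t$-frugal $H$-colouring. Write $\Delta=\Delta^-(H)$. The whole construction rests on one counting principle. Suppose a vertex $y$ of $G'$ has exactly $t\Delta$ in-neighbours and $f$ is a $t$-frugal $H$-colouring. The in-neighbours of $y$ must then use at least $\Delta$ distinct images, and all of these are in-neighbours of $f(y)$. Hence $f(y)$ has in-degree exactly $\Delta$, and every in-neighbour of $f(y)$ is the image of exactly $t$ in-neighbours of $y$. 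I will call such a $y$ a \emph{saturated} vertex. Saturated vertices are what detect the second condition in the definition of $H^*$: every vertex of $G'$ that represents a vertex of $G$ will be an in-neighbour of some saturated vertex, so its image has an out-neighbour of in-degree $\Delta$.

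The gadgets are as follows. Each vertex $v$ of $G$ is replaced by $t$ copies $v^1,\dots,v^t$. These are made in-neighbours of a saturated vertex $y_v$, whose remaining $t\Delta-t$ in-neighbours are new private vertices. Each edge $uv$ of $G$ gets a new vertex $w_{uv}$ whose in-neighbours are exactly $u^1,\dots,u^t,v^1,\dots,v^t$.

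Suppose all copies of a vertex are forced to receive the same image, and write $f(v)$ for that common image. Then $w_{uv}$ forces $f(u)\neq f(v)$, since otherwise $2t>t$ in-neighbours of $w_{uv}$ share an image. It also forces $f(u)$ and $f(v)$ to have the common out-neighbour $f(w_{uv})$. Together with $y_u$ and $y_v$, this shows that $f(u)f(v)$ is an edge of $H^*$, which gives the direction from a $t$-frugal $H$-colouring of $G'$ to an $H^*$-colouring of $G$.

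For the converse, start from an $H^*$-colouring $g$ of $G$ and give every copy $v^i$ the colour $g(v)$. For each vertex $v$, choose an out-neighbour $z$ of $g(v)$ of in-degree $\Delta$ and map $y_v$ to $z$. The private in-neighbours of $y_v$ are then placed in groups of exactly $t$ onto the other $\Delta-1$ in-neighbours of $z$. Since private vertices have no other constraints, this is a valid homomorphism and frugality at $y_v$ holds. For each edge $uv$, map $w_{uv}$ to a common out-neighbour of $g(u)$ and $g(v)$. Frugality at $w_{uv}$ holds because $g(u)\neq g(v)$ and each colour is used exactly $t$ times. Finally, each vertex $v^i$ may be an in-neighbour of many vertices, but every such vertex has its in-neighbours grouped as above, so no out-neighbourhood condition arises. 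The property $t\ge 2$ is used in the forcing step below, where it ensures that saturated-vertex arguments genuinely constrain groupings; when $t=1$ the analogous problem is hard to classify, as noted in the introduction.

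The main obstacle is forcing the $t$ copies $v^1,\dots,v^t$ to receive a common image. A single saturated vertex only forces its in-neighbours into $\Delta$ classes of size $t$; it does not force any specific $t$ of them into the same class. My first attempt will be to overlap several saturated vertices so that the class structures must be compatible. For example, fix a set $B$ of $t(\Delta-1)$ private vertices. For each pair of copies, use saturated vertices whose in-neighbourhoods are $B$ together with the $t$ copies, and swap single elements between $B$ and the copies. The aim is to argue that every class of $B$ must be closed, so that the copies form the remaining class.

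If that fails, the fallback is to drop the requirement that copies coincide. Instead I would show directly that, for each edge $uv$, the multiset of images of $\{u^i\}\cup\{v^i\}$ at $w_{uv}$ still yields two distinct colours $a,b$ that are adjacent in $H^*$. The remaining step is then to make the choice of colour per vertex of $G$ consistent across all edges containing it, which is exactly where the $t\ge 2$ counting must do the work.
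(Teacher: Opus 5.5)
Your proposal has a genuine gap, and it is exactly the step you flag as the ``main obstacle'': nothing in your construction forces the copies $v^1,\dots,v^t$ to receive a common image, and neither the first attempt nor the fallback is carried out.

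Without this forcing the reduction is actually false. Take $t=2$ and let $H$ have vertices $a,b,c$ and arcs $ac$, $bc$. Then $\Delta=2$, and $H^*$ consists of the edge $ab$ plus the isolated vertex $c$. Let $G=K_3$, which has no $H^*$-colouring. In your $G'$, map every $v^1$ to $a$ and every $v^2$ to $b$. Map the two private in-neighbours of each $y_v$ to $a$ and $b$, and map every $y_v$ and $w_{uv}$ to $c$. Every vertex with in-neighbours sees $a$ exactly twice and $b$ exactly twice, so this is a $2$-frugal $H$-colouring of $G'$.

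The same example rules out the fallback. At each $w_{uv}$ the images do contain two colours adjacent in $H^*$, but no consistent choice of one colour per vertex of $G$ exists. The swapping idea is not precise enough to check. Its stated goal, that the classes of $B$ are closed, is not something one saturated vertex, or several with the same in-neighbourhood, can deliver. What is needed is a statement about two saturated vertices whose in-neighbourhoods differ in exactly one element.

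That statement is the paper's gadget $F_0$. It consists of two vertices $x,y$, each of in-degree $t\Delta$, sharing a set $S$ of $t\Delta-1$ in-neighbours, with one further in-neighbour $w$ of $x$ and $z$ of $y$.
\begin{itemize}
\item Apply your saturation principle at $x$: the colours on $S$ are $\Delta-1$ colours used exactly $t$ times, and $f(w)$ used exactly $t-1$ times.
\item Applied at $y$, the same holds with $f(z)$ in place of $f(w)$.
\item Since $t\ge 2$, the deficient colour actually occurs on $S$. It is therefore determined by $S$ alone, so $f(w)=f(z)$.
\item For $t=1$ the deficient colour does not occur on $S$ and nothing is forced. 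This is the real role of the hypothesis $t\ge 2$.
\end{itemize}

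Chaining copies of $F_0$ from $v^i$ to $v^{i+1}$ forces all copies to agree. The saturated vertex $x$ of the first link already shows that $f(v^1)$ has an out-neighbour of in-degree $\Delta$, so $y_v$ becomes unnecessary.

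The gadgets extend in the converse direction just as you extend $y_v$. Map $x,y$ to an out-neighbour $d$ of $g(v)$ of in-degree $\Delta$. Map $t-1$ vertices of $S$ to $g(v)$, and map the rest in groups of $t$ to the other in-neighbours of $d$. With this in place, your edge vertex $w_{uv}$ with $t$ copies on each side does the same job as the paper's $q$, which receives arcs from the chained $u_i$ and from $v_1,v_2$.

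One small point: attach these gadgets only to non-isolated vertices of $G$, or recolour the isolated ones. Otherwise $g(v)$ need not have an out-neighbour of in-degree $\Delta$.
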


\begin{proof}
Let $G$ be an instance of $H^*$-colouring. We will give a construction to transform the graph $G$ into a directed graph $^*G$, and we will show that $^*G$ has a $t$-frugal $H$-colouring if and only if $G$ has an $H^*$ colouring. We first construct some gadgets that will be useful in the construction, and determine some of their properties.

First, let $w$, $x$, $y$, and $z$ be vertices. Add an arc from $w$ to $x$ and from $z$ to $y$. Let there be $t\Delta^-(H) - 1$ vertices each with $x$ and $y$ as out-neighbours. Call the resulting graph $F_0$ (See Figure~\ref{F0.}).

\begin{figure}[ht]
\begin{center}
	\begin{tikzpicture}
	\begin{pgfonlayer}{nodelayer}
	\node [style=yellowvert] (0) at (-2, -0) {};
	\node [style=yellowvert] (1) at (-1, -0) {};
	\node [style=yellowvert] (2) at (1, -0) {};
	\node [style=yellowvert] (3) at (2, -0) {};
	\node [style=yellowvert] (4) at (0, 0.75) {};
	\node [style=yellowvert] (5) at (0, 0.25) {};
	\node [style=yellowvert] (6) at (0, -0.75) {};
	\node [style=point] (7) at (0, -0) {};
	\node [style=point] (8) at (0, -0.25) {};
	\node [style=point] (9) at (0, -0.5) {};
	\node [] (10) at (-2, 0.285) {$w$};
	\node [] (11) at (-1, 0.285) {$x$};
	\node [] (12) at (1, 0.285) {$y$};
	\node [] (13) at (2, 0.285) {$z$};
	\end{pgfonlayer}
	\begin{pgfonlayer}{edgelayer}
	\draw [style=arrow] (0) to (1);
	\draw [style=arrow] (6) to (1);
	\draw [style=arrow] (5) to (1);
	\draw [style=arrow] (4) to (1);
	\draw [style=arrow] (4) to (2);
	\draw [style=arrow] (5) to (2);
	\draw [style=arrow] (6) to (2);
	\draw [style=arrow] (3) to (2);
	\end{pgfonlayer}
	\end{tikzpicture}
	\caption{The digraph $F_0$.}
	\label{F0.}
	\end{center}
\end{figure}

We claim that in any $t$-frugal $H$-colouring of $F_0$, $w$ and $z$ must receive the same colour. To see this, notice that the among the $t\Delta^-(H) - 1$ vertices which are common in-neighbours of $x$ and $y$, at most $\Delta^-(H)$ different colours can appear, and each such colour can only appear at most $t$ times. The only way for these $t\Delta^-(H) - 1$ vertices to be coloured meeting these criteria is for $\Delta^-(H) - 1$ colours to colour exactly $t\Delta^-(H) - t$ of the vertices, and for one colour, call it $a$, to colour exactly $t-1$ of these vertices. In order to satisfy the frugality condition, each of $w$ and $z$ must receive the colour $a$, which proves the claim.

Next, let $F_0^1, F_0^2, \ldots, F_0^{t-1}$ be copies of $F_0$, and denote the vertices of $F_0^i$ corresponding to the vertices $w$, $x$, $y$, and $z$ of $F_0$ as $w^i$, $x^i$, $y^i$, and $z^i$, respectively. Now, we assemble our copies of $F_0$ into a chain by identifying the vertices $z^i$ and $w^{i+1}$ for $1 \leq i \leq t-2$, and call the resulting vertex $u_{i+1}$. Rename $w^1$ to $u_1$, and rename $z^{t-1}$ to $u_{t-1}$. Call the resulting graph $F_1$ (See Figure~\ref{F1.}). Then, by the previous claim, we know that in any $t$-frugal $H$-colouring of $F_1$, the vertices $u_1, u_2,\ldots, u_{t-1}$ must all receive the same colour.

\begin{figure}[ht]
\begin{center}
	\begin{tikzpicture}
	\begin{pgfonlayer}{nodelayer}
	\node [style=yellowvert] (0) at (-8, -0) {};
	\node [style=yellowvert] (1) at (-7, -0) {};
	\node [style=yellowvert] (2) at (-5, -0) {};
	\node [style=yellowvert] (3) at (-4, -0) {};
	\node [style=yellowvert] (4) at (-6, 0.75) {};
	\node [style=yellowvert] (5) at (-6, 0.25) {};
	\node [style=yellowvert] (6) at (-6, -0.75) {};
	\node [style=point] (7) at (-6, -0) {};
	\node [style=point] (8) at (-6, -0.25) {};
	\node [style=point] (9) at (-6, -0.5) {};
	\node [] (10) at (-8, 0.285) {$u=u_1$};
	\node [] (11) at (-4, 0.285) {$u_2$};
	\node [style=yellowvert] (12) at (-2, -0.75) {};
	\node [style=yellowvert] (13) at (-2, 0.75) {};
	\node [style=point] (14) at (-2, -0.25) {};
	\node [style=yellowvert] (15) at (-1, -0) {};
	\node [style=point] (16) at (-2, -0.5) {};
	\node [style=point] (17) at (-2, -0) {};
	\node [style=yellowvert] (18) at (-2, 0.25) {};
	\node [style=yellowvert] (19) at (0, -0) {};
	\node [style=yellowvert] (20) at (-3, -0) {};
	\node [] (21) at (0, 0.285) {$u_3$};
	\node [style=point] (22) at (0.75, -0) {};
	\node [style=point] (23) at (1, -0) {};
	\node [style=point] (24) at (1.25, -0) {};
	\node [style=yellowvert] (25) at (4, -0.75) {};
	\node [style=point] (26) at (4, -0.25) {};
	\node [style=yellowvert] (27) at (4, 0.75) {};
	\node [style=yellowvert] (28) at (5, -0) {};
	\node [style=yellowvert] (29) at (2, -0) {};
	\node [style=point] (30) at (4, -0.5) {};
	\node [style=point] (31) at (4, -0) {};
	\node [style=yellowvert] (32) at (6, -0) {};
	\node [style=yellowvert] (33) at (4, 0.25) {};
	\node [] (34) at (2, 0.285) {$u_{t-2}$};
	\node [style=yellowvert] (35) at (3, -0) {};
	\node [] (36) at (6, 0.285) {$u_{t-1}$};
	\end{pgfonlayer}
	\begin{pgfonlayer}{edgelayer}
	\draw [style=arrow] (0) to (1);
	\draw [style=arrow] (6) to (1);
	\draw [style=arrow] (5) to (1);
	\draw [style=arrow] (4) to (1);
	\draw [style=arrow] (4) to (2);
	\draw [style=arrow] (5) to (2);
	\draw [style=arrow] (6) to (2);
	\draw [style=arrow] (3) to (2);
	\draw [style=arrow] (12) to (20);
	\draw [style=arrow] (18) to (20);
	\draw [style=arrow] (13) to (20);
	\draw [style=arrow] (13) to (15);
	\draw [style=arrow] (18) to (15);
	\draw [style=arrow] (12) to (15);
	\draw [style=arrow] (19) to (15);
	\draw [style=arrow] (3) to (20);
	\draw [style=arrow] (29) to (35);
	\draw [style=arrow] (25) to (35);
	\draw [style=arrow] (33) to (35);
	\draw [style=arrow] (27) to (35);
	\draw [style=arrow] (27) to (28);
	\draw [style=arrow] (33) to (28);
	\draw [style=arrow] (25) to (28);
	\draw [style=arrow] (32) to (28);
	\end{pgfonlayer}
	\end{tikzpicture}
	\caption{The digraph $F_1$.}
	\label{F1.}
\end{center}
\end{figure}

Now, take another copy of $F_0$, and let $v_2$ and $v_1$ denote the vertices corresponding to $w$ and $z$ respectively. Take this copy of $F_0$ along with $F_1$ and a new vertex $q$, then add an arc from each of $u_1,u_2,\ldots,u_{t-1},v_2$ and $v_1$ to $q$. Call the resulting graph $F$ (See Figure~\ref{F.}). The graph $F$ will serve as an edge replacement gadget in our transformation of $G$ to $^*G$.

\begin{figure}[htb]
\begin{center}
	\begin{tikzpicture}
	\begin{pgfonlayer}{nodelayer}
	\node [style=yellowvert] (0) at (-6.75, -0) {};
	\node [style=yellowvert] (1) at (-6, -0) {};
	\node [style=yellowvert] (2) at (-4.5, -0) {};
	\node [style=yellowvert] (3) at (-3.75, -0) {};
	\node [style=yellowvert] (4) at (-5.25, 0.75) {};
	\node [style=yellowvert] (5) at (-5.25, 0.25) {};
	\node [style=yellowvert] (6) at (-5.25, -0.75) {};
	\node [style=point] (7) at (-5.25, -0) {};
	\node [style=point] (8) at (-5.25, -0.25) {};
	\node [style=point] (9) at (-5.25, -0.5) {};
	\node [] (10) at (-6.75, -0.4) {$u_1$};
	\node [] (11) at (-3.75, -0.4) {$u_2$};
	\node [style=yellowvert] (12) at (-2.25, -0.75) {};
	\node [style=yellowvert] (13) at (-2.25, 0.75) {};
	\node [style=point] (14) at (-2.25, -0.25) {};
	\node [style=yellowvert] (15) at (-1.5, -0) {};
	\node [style=point] (16) at (-2.25, -0.5) {};
	\node [style=point] (17) at (-2.25, -0) {};
	\node [style=yellowvert] (18) at (-2.25, 0.25) {};
	\node [style=yellowvert] (19) at (-0.75, -0) {};
	\node [style=yellowvert] (20) at (-3, -0) {};
	\node [] (21) at (-0.75, -0.4) {$u_3$};
	\node [style=point] (22) at (-0.25, -0) {};
	\node [style=point] (23) at (0, -0) {};
	\node [style=point] (24) at (0.25, -0) {};
	\node [style=yellowvert] (25) at (2.25, -0.75) {};
	\node [style=point] (26) at (2.25, -0.25) {};
	\node [style=yellowvert] (27) at (2.25, 0.75) {};
	\node [style=yellowvert] (28) at (3, -0) {};
	\node [style=yellowvert] (29) at (0.75, -0) {};
	\node [style=point] (30) at (2.25, -0.5) {};
	\node [style=point] (31) at (2.25, -0) {};
	\node [style=yellowvert] (32) at (3.75, -0) {};
	\node [style=yellowvert] (33) at (2.25, 0.25) {};
	\node [] (34) at (0.75, -0.4) {$u_{t-2}$};
	\node [style=yellowvert] (35) at (1.5, -0) {};
	\node [] (36) at (3.75, -0.4) {$u_{t-1}$};
	\node [style=yellowvert] (37) at (6, -0.75) {};
	\node [style=point] (38) at (6, -0.25) {};
	\node [style=yellowvert] (39) at (6, 0.75) {};
	\node [style=yellowvert] (40) at (6.75, -0) {};
	\node [style=yellowvert] (41) at (4.5, -0) {};
	\node [style=point] (42) at (6, -0.5) {};
	\node [style=point] (43) at (6, -0) {};
	\node [style=yellowvert] (44) at (7.5, -0) {};
	\node [style=yellowvert] (45) at (6, 0.25) {};
	\node [] (46) at (4.5, -0.4) {$v_2$};
	\node [style=yellowvert] (47) at (5.25, -0) {};
	\node [] (48) at (7.5, -0.4) {$v_1$};
	\node [style=yellowvert] (49) at (0, 3) {};
	\node [] (50) at (0, 3.4) {$q$};
	\end{pgfonlayer}
	\begin{pgfonlayer}{edgelayer}
	\draw [style=arrow] (0) to (1);
	\draw [style=arrow] (6) to (1);
	\draw [style=arrow] (5) to (1);
	\draw [style=arrow] (4) to (1);
	\draw [style=arrow] (4) to (2);
	\draw [style=arrow] (5) to (2);
	\draw [style=arrow] (6) to (2);
	\draw [style=arrow] (3) to (2);
	\draw [style=arrow] (12) to (20);
	\draw [style=arrow] (18) to (20);
	\draw [style=arrow] (13) to (20);
	\draw [style=arrow] (13) to (15);
	\draw [style=arrow] (18) to (15);
	\draw [style=arrow] (12) to (15);
	\draw [style=arrow] (19) to (15);
	\draw [style=arrow] (3) to (20);
	\draw [style=arrow] (29) to (35);
	\draw [style=arrow] (25) to (35);
	\draw [style=arrow] (33) to (35);
	\draw [style=arrow] (27) to (35);
	\draw [style=arrow] (27) to (28);
	\draw [style=arrow] (33) to (28);
	\draw [style=arrow] (25) to (28);
	\draw [style=arrow] (32) to (28);
	\draw [style=arrow] (41) to (47);
	\draw [style=arrow] (37) to (47);
	\draw [style=arrow] (45) to (47);
	\draw [style=arrow] (39) to (47);
	\draw [style=arrow] (39) to (40);
	\draw [style=arrow] (45) to (40);
	\draw [style=arrow] (37) to (40);
	\draw [style=arrow] (44) to (40);
	\draw [style=arrow, bend left=15, looseness=1.00] (0) to (49);
	\draw [style=arrow] (3) to (49);
	\draw [style=arrow] (19) to (49);
	\draw [style=arrow] (29) to (49);
	\draw [style=arrow] (32) to (49);
	\draw [style=arrow] (41) to (49);
	\draw [style=arrow, bend right=15, looseness=1.00] (44) to (49);
	\end{pgfonlayer}
	\end{tikzpicture}
	\caption{The digraph $F$.}
	\label{F.}
\end{center}
\end{figure}

To construct $^*G$ from $G$, replace every edge $uv$ of $G$ with a copy of $F$, such that $u=u_1$ and $v=v_1$. The construction can clearly be accomplished in polynomial time. Note that while the edge gadget is not symmetric, it does not matter in what follows. We now show that $^*G$ has a $t$-frugal $H$-colouring if and only if $G$ has an $H^*$ colouring.

First, suppose that $^*G$ has a $t$-frugal $H$-colouring $^*c$. We claim that the colouring $c$ obtained by setting $c(v) =$ $^*c(v)$ is a proper $H^*$-colouring of $G$. To do this, we must show that for every $uv \in E(G)$, $c(u)c(v) \in E(H^*)$. By the definition of $H^*$, we must show that $c(u)$ and $c(v)$ are distinct, that $c(u)$ and $c(v)$ have a common out-neighbour in $H$, and that each of $c(u)$ and $c(v)$ have an out-neighbour in $H$ with in-degree $\Delta^-(H)$. Recall that in any $t$-frugal $H$-colouring of $F$, the vertices $u=u_1,u_2,\ldots,u_{t-1}$ must all receive the same colour and the vertices $v=v_1$ and $v_2$ must receive the same colour. Therefore, if $c(u) = c(v)$ the vertex $q$ has $t+1$ in-neighbours of the same colour, which violates the frugality condition, and hence $c(u)$ and $c(v)$ must be distinct. The vertices $u$ and $v$ in each copy of $F$ have a common out-neighbour in $F$, namely $q$, and therefore $c(u)$ and $c(v)$ have $c(q)$ as a common out-neighbour in $H$. Finally, the vertices $u$ and $v$ in each copy of $F$ have out-neighbours with in-degree $t\Delta^-(H)$, which can only be coloured by a vertex of $H$ with in-degree $\Delta^-(H)$, and hence $c(u)$ and $c(v)$ must each have an out-neighbour of in-degree $\Delta^-(H)$ in $H$. Therefore, $c$ is a proper $H^*$-colouring of $G$.

Now, suppose that $G$ has an $H^*$-colouring $c$. First, notice that the vertices of $^*G$ which correspond to vertices of $G$ have only out-neighbours, and therefore the frugality condition is vacuously satisfied for these vertices. Therefore, if we can extend a partial colouring of $F$ where only the vertices $u=u_1$ and $v=v_1$ are coloured to a $t$-frugal $H$-colouring of $F$, we can simply do this to each copy of $F$ in $^*G$ and we will obtain a $t$-frugal $H$-colouring of $^*G$. Consider an edge $uv$ of $G$, and let $a$ and $b$ denote $c(u)$ and $c(v)$ respectively. Recall that $F$ is composed of a copy of $F_1$, a copy of $F_0$, and a vertex $q$. Let $^*c(u_i) = a$ for $1 \leq i \leq t-1$, and let $^*c(v_1)=$ $^*c(v_2)=b$. The vertices with in-degree $t\Delta^-(H)$ will then have a vertex coloured $a$ or $b$ as an in-neighbour. 
By the definition of $E(H^*)$, the vertices $a$ and $b$ have out-neighbours $d$ and $e$, respectively, with in-degree $\Delta^-(H)$ in $H$. 
Colour the vertices of in-degree $t\Delta^-(H)$ that have an in-neighbour coloured $a$ with $d$, and those that have an in-neighbour coloured $b$ with $e$. We now have the groups of $t\Delta^-(H) - 1$ vertices, which have common out-neighbours that are either both coloured $d$, or both coloured $e$. Either way, we can colour these vertices greedily with the in-neighbours of $d$ or $e$ respectively while respecting the frugality condition, as $d$ and $e$ have in-degree $\Delta^-(H)$ in $H$. Furthermore, since $a$ and $b$ have a common out-neighbour $f$ in $H$,  we can use the colour $f$ to colour $q$. This colours every vertex of $F$. It remains to check that this colouring satisfies the frugality condition. The only vertices with in-neighbours in $F$ are the vertices with in-degree $t\Delta^-(H)$ (for which we have already verified the frugality condition) and the vertex $q$. Since $a$ and $b$ are distinct vertices of $H$, the frugality condition is satisfied for $q$. The frugality condition is vacuously satisfied for vertices with no in-neighbours. This completes the proof.
\end{proof}


Recall that in Corollary~\ref{linkSAT}, we showed that monotone $\ell$-in-$k$-SAT is NP-complete whenever $k \geq 3$. We can use this to address the case where $H^*$ is a bipartite graph, and obtain the following result.

\begin{theo}
Let $H$ be a directed graph with $\Delta^-(H) \geq 2$, and let $t \geq 2$. Then, $t$-frugal $H$-colouring is NP-complete.
\end{theo}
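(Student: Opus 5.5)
The plan is to split on whether $H^*$ is bipartite, in line with the remark before the statement. Membership in NP is clear, since a candidate colouring can be checked for being a homomorphism and for frugality in polynomial time. Throughout, write $\Delta = \Delta^-(H) \geq 2$.

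\textbf{Case 1: $H^*$ is not bipartite.} By construction $H^*$ is a loopless simple graph, since its adjacent vertices are distinct. So $H^*$-colouring is NP-complete by \cite{HN}, and Theorem~\ref{reduc} transfers NP-hardness to $t$-frugal $H$-colouring.

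\textbf{Case 2: $H^*$ is bipartite.} First I observe that this forces $\Delta = 2$. Let $d$ have in-degree $\Delta$. Any two distinct in-neighbours of $d$ have $d$ as a common out-neighbour, and each has an out-neighbour of in-degree $\Delta$ (namely $d$). Hence the in-neighbours of $d$ form a clique in $H^*$. If $\Delta \geq 3$ this clique contains a triangle, contradicting bipartiteness. So $\Delta = 2$, and I reduce from monotone $t$-in-$(2t)$-SAT, which is NP-complete by Corollary~\ref{linkSAT} since $2t \geq 4$.

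Let $S$ be an instance with distinct variables in each clause. I may assume $S$ is connected, meaning its variable--clause incidence graph is connected, since otherwise the components are handled separately. Build $G_S$ as follows.
\begin{itemize}
\item Each variable $x$ becomes a vertex $x$.
\item Each clause $C_j$ becomes a vertex $c_j$, with an arc from each of its $2t$ variable vertices to $c_j$.
\end{itemize}
Variable vertices have no in-neighbours, and each $c_j$ has in-degree exactly $2t$. The construction is clearly polynomial.

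\emph{From a satisfying assignment to a colouring.} Fix $d$ with in-neighbours $p \neq q$. Colour true variables $p$, false variables $q$, and every $c_j$ with $d$. This is a homomorphism, and each $c_j$ sees exactly $t$ in-neighbours of each colour, so it is $t$-frugal.

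\emph{From a colouring to a satisfying assignment.} Take a $t$-frugal $H$-colouring $f$ of $G_S$.
\begin{itemize}
\item The $2t$ in-neighbours of $c_j$ use each colour at most $t$ times, so they use at least two colours. These colours are in-neighbours of $f(c_j)$, which has in-degree at most $2$. Hence $f(c_j)$ has in-degree exactly $\Delta = 2$, and the in-neighbours of $c_j$ use exactly two colours $a_j \neq b_j$, each exactly $t$ times.
\item The colours $a_j$ and $b_j$ share the out-neighbour $f(c_j)$, which has in-degree $\Delta$, so $a_jb_j \in E(H^*)$.
\item Because $S$ is connected, every colour used on a variable vertex lies in a single connected component $K$ of $H^*$. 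Two variables sharing a clause receive equal or $H^*$-adjacent colours, and connectivity propagates this along the incidence graph.
\item Fix a bipartition $(A,B)$ of $K$ and make $x$ true iff $f(x) \in A$. In each clause $a_j$ and $b_j$ lie on opposite sides, so exactly $t$ of its variables are true.
\end{itemize}

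The main obstacle I expect is this last consistency step. The truth values must be defined globally even though different clause vertices may map to different vertices of in-degree $2$, so that the colour pairs $\{a_j,b_j\}$ vary from clause to clause. The bipartiteness of $H^*$, together with the connectivity of the instance, is exactly what makes the $2$-colouring of $K$ a well-defined global truth assignment. Showing that $\Delta \geq 3$ already forces non-bipartiteness is what removes the remaining cases.
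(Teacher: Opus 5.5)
Your proposal is correct and matches the paper's proof: the same split on whether $H^*$ is bipartite, the same observation that the in-neighbours of a maximum in-degree vertex form a clique in $H^*$ (forcing $\Delta^-(H)=2$), and the same reduction from monotone $t$-in-$2t$-SAT. The one difference is your connectivity assumption and restriction to a component $K$, which is unnecessary: since all of $H^*$ is bipartite, the paper fixes a single proper $2$-colouring of $H^*$ and sets each variable's truth value from the class of its colour, which is already globally well defined, so the ``main obstacle'' you anticipate does not arise.
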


\begin{proof}
If $H^*$ is not bipartite, then $H^*$-colouring is NP-complete \cite{HN} and by Theorem~\ref{reduc} $t$-frugal $H$-colouring is NP-complete. Therefore, we may assume that $H^*$ is bipartite. Notice that this implies $\Delta^-(H) = 2$, as the set of in-neighbours of a vertex of $H$ with in-degree $\Delta^-(H)$ induces a copy of $K_{\Delta^-(H)}$ in $H^*$.

The transformation is from monotone $t$-in-$2t$-SAT. Let an instance $S$ of monotone $t$-in-$2t$-SAT
be given. Construct a digraph $G$ as follows. For every clause $c$ of $S$, create a vertex $v_c$, and for every variable $\ell$ of $S$, create a vertex $u_{\ell}$. Then, whenever $\ell$ is a variable of the clause $c$, add an arc in $G$ from $u_{\ell}$ to $v_c$. This completes the construction of $G$. This construction can clearly be completed in polynomial time.

Consider a $t$-frugal $H$-colouring, $f$, of $G$. Let $h^*$ be a proper $2$-colouring of $H^*$, using colours $0$ and $1$, and let $h$ be the $2$-colouring of $H$ induced by $h^*$. We construct a truth assignment for $S$ by setting the truth value of the variable $\ell$ to be the value of the colour $h \circ f(u_{\ell})$. Consider a clause $c$ of $S$, and let $f(v_c) = x$. Note that $v_c$ has in-degree $2t$, and therefore $x$ must be a vertex of in-degree $2$ in $H$. Let $y$ and $z$ denote the in-neighbours of $x$. Since $f$ is a $t$-frugal colouring, exactly $t$ of the $2t$ in-neighbours of $v_c$ must be coloured $y$, and exactly $t$ of them must be coloured $z$. Then, since $y$ and $z$ have a common out-neighbour, $x$, in $H$, they are adjacent in $H^*$. Therefore $h(y) \neq h(z)$, and the constructed truth assignment for $S$ is a valid truth assignment where every clause has exactly $t$ true variables.

Now, consider a satisfying truth assignment for $S$ where each clause has exactly $t$ true variables. 
We construct a $t$-frugal $H$-colouring, $f$, of $G$ as follows.
Let $x$ be a vertex of $H$ with in-degree $2$ and in-neighbours $y$ and $z$. For each clause $c$, let $f(v_c) = x$, and for each variable $\ell$, let $f(u_{\ell}) = y$ if $\ell$ is assigned the value true, and let $f(u_{\ell}) =z $ if $\ell$ is assigned the value false. The resulting colouring $f$ is clearly $t$-frugal because each clause contains exactly $t$ variables assigned the value true, and $t$ variables assigned the value false. This completes the proof.
\end{proof}

We are left to consider the case where $\Delta^-(H) = 1$. A subgraph $H'$ of $H$ is a \emph{retract} of $H$ if there exists a homomorphism $h$ from $H$ to $H'$ such that $h(v) = v$ for every vertex $v$ of $H'$. A graph $H$ with no proper retract is known as a \emph{core} \cite{HNBook}.

\begin{lemma}
\label{core}
Let $H$ be a directed graph with $\Delta^-(H) = 1$. Let $H'$ be a retract of $H$. Then a graph $G$ has a $t$-frugal $H$-colouring if and only if $G$ has a $t$-frugal $H'$-colouring.
\end{lemma}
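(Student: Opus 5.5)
The plan is to prove the two directions separately. In both, the key observation is that when $\Delta^-(H)=1$ the frugality condition is automatic for every $H$-colouring, not just a retraction.

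For the forward direction, suppose $f$ is a $t$-frugal $H$-colouring of $G$, and let $h$ be a retraction of $H$ onto $H'$. Then $h\circ f$ is a homomorphism of $G$ to $H'$, and it remains to check frugality. Fix a vertex $v$ of $G$. Every in-neighbour of $v$ is mapped by $f$ to an in-neighbour of $f(v)$ in $H$. Since $\Delta^-(H)=1$, there is at most one such in-neighbour, so all in-neighbours of $v$ receive the same colour under $f$. Because $f$ is $t$-frugal, $v$ therefore has at most $t$ in-neighbours in total. Consequently, under any colouring whatsoever, at most $t$ in-neighbours of $v$ can share a colour, and in particular $h\circ f$ is $t$-frugal.

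For the reverse direction, suppose $g$ is a $t$-frugal $H'$-colouring of $G$. Composing with the inclusion $H'\subseteq H$ gives an $H$-colouring of $G$. The colour classes on each in-neighbourhood are unchanged, so this colouring is still $t$-frugal. Alternatively, $\Delta^-(H')\le\Delta^-(H)=1$ gives the same bound on in-degrees of $G$ as before.

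I do not expect a real obstacle here. The only point needing care is the observation in the forward direction: the existence of a $t$-frugal $H$-colouring forces every vertex of $G$ to have in-degree at most $t$. It is exactly this that makes frugality survive composition with the retraction, which in general could merge colours that were previously distinct.
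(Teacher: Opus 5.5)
Your proposal is correct and follows the paper's proof essentially step for step. The reverse direction is handled by the inclusion $H'\subseteq H$. In the forward direction, $\Delta^-(H)=1$ forces all in-neighbours of each vertex of $G$ to share a colour, so $G$ has maximum in-degree at most $t$, and hence $h\circ f$ is automatically $t$-frugal.
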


\begin{proof}
A $t$-frugal $H'$-colouring of $G$ induces a $t$-frugal $H$-colouring of $G$.

Let $f$ be a $t$-frugal $H$-colouring of $G$, and let $h$ be a retraction from $H$ to $H'$. We claim that $h \circ f$ is a $t$-frugal $H'$-colouring of $G$. Since the composition of two homomorphisms is a homomorphism, we need only show that $h \circ f$ is $t$-frugal. We know that $\Delta^-(H) = 1$, and therefore the homomorphism $f$ maps all in-neighbours of any vertex $v$ of $G$ to the same vertex of $H$. Hence, $G$ has maximum in-degree $t$ and therefore $h \circ f$ is $t$-frugal.
\end{proof}

\begin{theo}
If $H$ is a directed graph with $\Delta^-(H) = 1$, then $t$-frugal $H$-colouring is solvable in polynomial time.
\end{theo}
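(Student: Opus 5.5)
The plan is to show that, when $\Delta^-(H)=1$, frugality reduces to a simple degree condition on $G$, so the problem becomes ordinary $H$-colouring for a very restricted class of targets $H$. That problem I then decide directly.

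\textbf{Step 1 (reduce frugality to a degree condition).} Let $f$ be any $H$-colouring of $G$. Since every vertex of $H$ has at most one in-neighbour, all in-neighbours of a vertex $v$ of $G$ are mapped to the unique in-neighbour of $f(v)$. This is the same observation used in the proof of Lemma~\ref{core}. Hence $f$ is $t$-frugal if and only if $\Delta^-(G)\le t$. The algorithm therefore first checks $\Delta^-(G)\le t$, rejecting if this fails, and otherwise decides whether $G$ has any homomorphism to $H$.

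\textbf{Step 2 (structure of $H$ and its retracts).} Every weakly connected component of a digraph with $\Delta^-\le 1$ is one of two kinds:
\begin{itemize}
\item an out-tree, in which case it has at most one vertex of in-degree $0$;
\item a single directed cycle, possibly a loop, with out-trees hanging from the cycle vertices.
\end{itemize}
I will retract each component as follows.
\begin{itemize}
\item A unicyclic component whose cycle has length $k$ retracts onto that cycle $C_k$. A vertex at distance $d$ from the cycle, hanging below cycle vertex $c_j$, is sent to $c_{j+d \bmod k}$. This map preserves arcs and fixes the cycle.
\item An out-tree of height $h$ retracts onto a longest directed path $P$ from its root. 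Each vertex is sent to the vertex of $P$ at the same depth.
\end{itemize}
By Lemma~\ref{core} (or simply because a retraction preserves the existence of homomorphisms), $G$ has an $H$-colouring if and only if it has a colouring to the disjoint union of these directed cycles and directed paths.

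\textbf{Step 3 (test each component of $G$).} A weakly connected component of $G$ maps into this union if and only if it maps into a single cycle $C_k$ or a single path $P_h$ of the union. I test each of these polynomially many targets separately.
\begin{itemize}
\item \emph{Cycle $C_k$.} Perform a BFS that assigns levels: going forward along an arc adds $+1$ and going backward adds $-1$, with levels taken modulo $k$. The component maps to $C_k$ exactly when no conflict arises, i.e.\ every cycle of the underlying graph has net length divisible by $k$. For $k=1$, a loop, every component passes.
\item \emph{Path $P_h$.} Run the same BFS with integer levels. The component maps to $P_h$ exactly when it is balanced (no level conflicts) and the difference between its maximum and minimum levels is at most $h$.
\end{itemize}
$G$ is accepted if and only if every component passes against some target.

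The only step needing care is Step 2: I must verify that the retraction maps are homomorphisms, including vertices attached to cycle vertices and loops treated as $C_1$. I expect this to be routine. Everything else runs in linear time per target.
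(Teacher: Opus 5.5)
Your proposal is correct and follows essentially the same route as the paper. Both use the observation from Lemma~\ref{core} that, when $\Delta^-(H)=1$, an $H$-colouring is $t$-frugal exactly when $\Delta^-(G)\le t$, then reduce $H$ by retraction to directed paths and cycles and test homomorphisms to these in polynomial time; the differences are that you retract each component with explicit maps instead of passing to the core (whose classification the paper asserts without proof), and you give the level-function test yourself rather than citing Maurer, Sudborough and Welzl, which makes your version more self-contained.
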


\begin{proof}
By Lemma~\ref{core} we may assume $H$ is a core. The core of a directed graph $H$ with $\Delta^-(H) = 1$ is either a directed path, a disjoint union of directed cycles which have lengths that do not divide one another, or a loop. We proceed by cases.

First note that when $H$ is a loop, a given digraph $G$ has a $t$-frugal homomorphism to $H$ if and only if $G$ has maximum in-degree $t$, which can be easily determined.

Now suppose $H$ is a directed path.
Consider an instance $G$ of $t$-frugal $H$-colouring. If $G$ has maximum in-degree greater than $t$, then a $t$-frugal $H$-colouring of $G$ does not exist, and the maximum in-degree of $G$ can easily be determined. 
Otherwise, $G$ has maximum in-degree at most $t$, and any $H$-colouring of $G$ is $t$-frugal. 
The existence of a homomorphism to a directed path
can be checked in polynomial time \cite{MSW}.


Finally suppose $H$ is a disjoint union of directed cycles $C_1,C_2,\ldots,C_k$, of pairwise relatively prime lengths.
Consider an instance $G$ of $t$-frugal $H$-colouring. 
As before, if $G$ has maximum in-degree greater than $t$ then a $t$-frugal $H$-colouring of $G$ does not exist, and if $G$ has maximum in-degree at most $t$ then any $H$-colouring of $G$ is $t$-frugal. 
The existence of a homomorphism to a directed cycle, and therefore to a vertex-disjoint union of directed cycles,
can be checked in polynomial time \cite{MSW}.


This completes the proof.
\end{proof}

Combining our results, we obtain a dichotomy theorem for $t$-frugal $H$-colouring.

\begin{theo}
Let $H$ be a directed graph, and let $t \geq 2$. Then, the problem $t$-frugal $H$-colouring is solvable in polynomial time if $\Delta^-(H) = 1$, and NP-complete if $\Delta^-(H) \geq 2$.
\end{theo}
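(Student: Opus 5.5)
The plan is to combine the two preceding theorems, after disposing of the degenerate case where $H$ has no arcs. The in-degree of $H$ is always at least $0$. If $\Delta^-(H)=0$, then $H$ has no arcs, and a digraph $G$ has an $H$-colouring exactly when $G$ has no arcs. In that case frugality is vacuous, so the problem is trivially polynomial. This case is consistent with the statement, which only makes claims for $\Delta^-(H)=1$ and $\Delta^-(H)\ge 2$. The rest of the proof is a case split on $\Delta^-(H)$.

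If $\Delta^-(H)=1$, invoke the preceding theorem directly; it shows that $t$-frugal $H$-colouring is solvable in polynomial time. That theorem relies on Lemma~\ref{core} to reduce to the core of $H$, and on the polynomial algorithms of \cite{MSW} for directed paths and unions of directed cycles, once the maximum in-degree of $G$ is checked against $t$.

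If $\Delta^-(H)\ge 2$, invoke the theorem just before Lemma~\ref{core}, which gives NP-hardness. That theorem itself splits on whether $H^*$ is bipartite.
\begin{itemize}
\item If $H^*$ is not bipartite, Theorem~\ref{reduc} combined with \cite{HN} gives NP-hardness.
\item If $H^*$ is bipartite, $\Delta^-(H)=2$ is forced, and the reduction from monotone $t$-in-$2t$-SAT applies. That problem is NP-complete by Corollary~\ref{linkSAT}, since $2t\ge 4\ge 3$.
\end{itemize}

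For NP-completeness we also need membership in NP. A candidate map $f\colon V(G)\to V(H)$ is a certificate of polynomial size. Checking that every arc maps to an arc, and that for every vertex no colour occurs on more than $t$ of its in-neighbours, takes polynomial time.

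No step here is a real obstacle, since the substantive work is in the earlier results. The only points needing care are the degenerate $\Delta^-(H)=0$ case and the NP membership remark; I would state both briefly and then conclude the dichotomy.
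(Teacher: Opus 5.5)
Your proposal is correct and follows essentially the same route as the paper, which obtains the dichotomy by combining the polynomial-time theorem for $\Delta^-(H)=1$ with the NP-completeness theorem for $\Delta^-(H)\ge 2$ (itself split on whether $H^*$ is bipartite). Your explicit NP-membership check and your remark on the degenerate case $\Delta^-(H)=0$ are sound additions that the paper leaves implicit.
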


	\bibliographystyle{plain}
	\bibliography{References}{}

@incollection {Schaefer,
	AUTHOR = {Schaefer, Thomas J.},
	TITLE = {The complexity of satisfiability problems},
	BOOKTITLE = {Conference {R}ecord of the {T}enth {A}nnual {ACM} {S}ymposium
	on {T}heory of {C}omputing ({S}an {D}iego, {C}alif., 1978)},
	PAGES = {216--226},
	PUBLISHER = {ACM, New York},
	YEAR = {1978},
	MRCLASS = {68C25 (03B05 03D15 05C15 68E10)},
	MRNUMBER = {521057},
	MRREVIEWER = {M. I. Dekhtyar},
}

@article {BBDMY,
	AUTHOR = {Bard, Stefan and Bellitto, Thomas and Duffy, Christopher and
	MacGillivray, Gary and Yang, Feiran},
	TITLE = {Complexity of locally-injective homomorphisms to tournaments},
	JOURNAL = {Discrete Math. Theor. Comput. Sci.},
	FJOURNAL = {Discrete Mathematics \& Theoretical Computer Science. DMTCS.},
	VOLUME = {20},
	YEAR = {2018},
	NUMBER = {2},
	PAGES = {Paper No. 4, 22},
	MRCLASS = {05C20 (68Q17 68Q25 68R10)},
	MRNUMBER = {3887371},
}

@article{HN,
	title = "On the complexity of {$H$}-coloring",
	journal = "Journal of Combinatorial Theory, Series B",
	volume = "48",
	number = "1",
	pages = "92 - 110",
	year = "1990",
	issn = "0095-8956",
	doi = "https://doi.org/10.1016/0095-8956(90)90132-J",
	url = "http://www.sciencedirect.com/science/article/pii/009589569090132J",
	author = "Pavol Hell and Jaroslav Nešetřil",
}

@INPROCEEDINGS{B17,
	author = {Bulatov, Andrei},
	booktitle = {2017 IEEE 58th Annual Symposium on Foundations of Computer Science (FOCS)},
	title = {A Dichotomy Theorem for Nonuniform {CSP}s},
	year = {2017},
	pages = {319-330},
	doi = {10.1109/FOCS.2017.37},
	url = {doi.ieeecomputersociety.org/10.1109/FOCS.2017.37},
	ISSN = {0272-5428},
}

@incollection {Z17,
	AUTHOR = {Zhuk, Dmitry},
	TITLE = {An algorithm for constraint satisfaction problem},
	BOOKTITLE = {2017 {IEEE} 47th {I}nternational {S}ymposium on
	{M}ultiple-{V}alued {L}ogic---{ISMVL} 2017},
	PAGES = {1--6},
	PUBLISHER = {IEEE, New York},
	YEAR = {2017},
	MRCLASS = {68Q17 (08A70)},
	MRNUMBER = {3703720},
}

@article{inj8,
	author    = {Gary MacGillivray and
	Jacobus Swarts},
	title     = {The complexity of locally injective homomorphisms},
	journal   = {Discrete Mathematics},
	volume    = {310},
	number    = {20},
	pages     = {2685--2696},
	year      = {2010},
	url       = {http://dx.doi.org/10.1016/j.disc.2010.03.034},
	doi       = {10.1016/j.disc.2010.03.034},
	bibsource = {dblp computer science bibliography, http://dblp.org}
}

@article{Russell,
	author = {Russell J. Campbell},
	title = {Reflexive Injective Oriented Colourings},
	journal = {M.Sc. Thesis, Department of Mathematics and Statistics, University of Victoria, Canada},
	year = {2009}
}

@article {Rus1,
	AUTHOR = {Campbell, Russell J. and Clarke, Nancy E. and MacGillivray, Gary},
	TITLE = {Injective Homomorphisms to Small Tournaments},
	JOURNAL = {Open Journal of Discrete Mathematics},
	YEAR = {2023},
	PAGES = {1--15},
	VOLUME = {13}
}

@inproceedings{FederVardi,
	author = {Feder, Tom\'{a}s and Vardi, Moshe Y.},
	title = {Monotone Monadic {SNP} and Constraint Satisfaction},
	year = {1993},
	isbn = {0897915917},
	publisher = {Association for Computing Machinery},
	address = {New York, NY, USA},
	url = {https://doi.org/10.1145/167088.167245},
	doi = {10.1145/167088.167245},
	booktitle = {Proceedings of the Twenty-Fifth Annual ACM Symposium on Theory of Computing},
	pages = {612–622},
	numpages = {11},
	location = {San Diego, California, USA},
	series = {STOC '93}
}

@article{SBard,
	author = {Stefan Bard},
	title = {Complexity of Frugal Homomorphisms},
	journal = {Ph.D.  Thesis, Department of Mathematics and Statistics, University of Victoria, Canada},
	year = {2021}
}

@book {HNBook,
    AUTHOR = {Hell, Pavol and Ne\u{s}et\u{r}il, Jaroslav},
     TITLE = {Graphs and homomorphisms},
    SERIES = {Oxford Lecture Series in Mathematics and its Applications},
    VOLUME = {28},
 PUBLISHER = {Oxford University Press, Oxford},
      YEAR = {2004},
     PAGES = {xii+244},
      ISBN = {0-19-852817-5},
   MRCLASS = {05-02 (05C15 05C60 18B15 68R10)},
  MRNUMBER = {2089014},
MRREVIEWER = {Richard\ C.\ Brewster},
       DOI = {10.1093/acprof:oso/9780198528173.001.0001},
       URL = {https://doi.org/10.1093/acprof:oso/9780198528173.001.0001},
}

@article {MSW,
    AUTHOR = {Maurer, H. A. and Sudborough, J. H. and Welzl, E.},
     TITLE = {On the complexity of the general coloring problem},
   JOURNAL = {Inform. and Control},
  FJOURNAL = {Information and Control},
    VOLUME = {51},
      YEAR = {1981},
    NUMBER = {2},
     PAGES = {128--145},
      ISSN = {0019-9958},
   MRCLASS = {68C25 (03D15 05C15 68E10)},
  MRNUMBER = {686834},
       DOI = {10.1016/S0019-9958(81)90226-6},
       URL = {https://doi.org/10.1016/S0019-9958(81)90226-6},
}

\end{document}